\theoremstyle{definition}
\newtheorem{remark}[equation]{Remark}
\newtheorem{example}[equation]{Example}
\theoremstyle{plain}
\newtheorem{theorem}[equation]{Theorem}
\newtheorem{proposition}[equation]{Proposition}
\newtheorem{corollary}[equation]{Corollary}
\newtheorem{question}[equation]{Question}
\numberwithin{equation}{section}
\numberwithin{figure}{section}
\numberwithin{table}{section}
\newcommand{\C}{\mathbb C} 
\newcommand{\Q}{\mathbb Q} 
\newcommand{\R}{\mathbb R} 
\newcommand{\Z}{\mathbb Z} 
\renewcommand{\tilde}{\widetilde}
\renewcommand{\bar}{\overline}
\newcommand{\supp}{\operatorname{supp}}
\renewcommand{\div}{\ensuremath{\operatorname{div}}}
\newcommand{\Cone}{\ensuremath{\operatorname{Cone}}}
\DeclareMathOperator{\wt}{wt}
\newcommand{\QED}{\hfill\end{proof}}
\newcommand{\erem}{~\hfill$\lozenge$\end{remark}\vskip 3pt}
\newcommand{\eex}{~\hfill$\lozenge$\end{example}\vskip 3pt}
\title[Hodge ideal and spectrum]{Hodge ideal and spectrum of weighted homogeneous isolated singularities}
\author{Seung-Jo Jung, In-Kyun Kim, Youngho Yoon}
\address{
Department of Mathematical Sciences, 
Seoul National University, 
GwanAkRo 1,  Gwanak-Gu,  
Seoul 08826, Korea
}
\email{toyul419@snu.ac.kr}
\email{soulcraw@snu.ac.kr}
\email{nsyyh@snu.ac.kr}
\begin{document}
\subjclass[2010]{14F17, 32S25, 14J17, 14D07}
\thanks{This work was supported by BK21 PLUS SNU Mathematical Sciences Division and the National Research Foundation of Korea(NRF) grant funded by the Ministry of  Science, ICT and Future Planning (the first author: NRF-2018R1D1A1B07046508, the second author: NRF-2017R1C1B1011921, and the third author:  NRF-2017R1C1B1005166) .}
\begin{abstract}
This note provides a connection between the Hodge spectrum and the Hodge ideals recently developed by Musta\c{t}\v{a}--Popa in the case of weighted homogeneous isolated singularities. 
\end{abstract}
\maketitle

\section{Introduction}\label{intro}
The Hodge spectrum of a hypersurface singularity given by a local equation $f$ is a fractional Laurent polynomial
\[
Sp_f(t):=\sum_{\alpha \in \mathbb{Q}} n_{f,\alpha}t^\alpha.
\]
It encodes information about the Hodge filtration and the monodromy action on the cohomology of the Milnor fibre. If $n_{f,\alpha}$ is non-zero, then $\alpha$ is called a \emph{spectral number} and $n_{f,\alpha}$ is called the \emph{multiplicity} of $\alpha$. Budur~\cite{MR2015069} provides an interesting connection between these numbers for $0<\alpha\leq 1$ and  \emph{multiplier ideals}. However,  multiplier ideals do not have enough information about  the spectral numbers and the multiplicities for $\alpha >1$.

On the other hand, in \cite{2016arXiv160508088M, 2018arXiv180701932M, 2018arXiv180701935M} Musta\c{t}\v{a}--Popa has developed the theory of Hodge ideals motivated by Saito's theory of mixed Hodge modules \cite{MR1047415}. The Hodge ideals $I_k(D)$ of a $\Q$-divisor $D$ are indexed by non-negative integer $k$. For $k=0$, the Hodge ideal $I_0(D)$ is the multiplier ideal $\mathcal{I}((1-\epsilon)D)$ for $0<\epsilon \ll 1$. For a weighted homogeneous isolated singularity given by $f$, very recently Zhang~\cite{2018arXiv181006656Z} gives an explicit formula of Hodge ideals $I_k(\alpha Z)$ where $Z=(f=0)$ is a reduced divisor (cf. Saito \cite{MR2567401} for $\alpha=1$).

The aim of this note is to provide a relation between the whole Hodge spectrum and  the Hodge ideals in the case of weighted homogeneous polynomials with isolated singularities. The main result is Theorem~\ref{theorem}, which shows that the spectral numbers and the multiplicities appear as ``jumping numbers" and ``jumping dimension" of a decreasing sequence of ideals related to the Hodge ideals. Furthermore, we give observations on a possible generalization to other cases.

Let $f\colon (\mathbb{C}^n,0)\to(\mathbb{C},0)$ be a weighted homogeneous holomorphic function with an isolated singularity at $0\in X:=\mathbb{C}^n$ and let $Z$ denote the reduced divisor defined by $f=0$. 
For $k\in \mathbb{Z}_{\geq 0}$ and $\alpha \in (0,1] \cap \mathbb{Q}$, 
define ideals in $\mathbb{C}\{x_1,\cdots, x_n\}$ 
\[
J^{\mu}_{k+\alpha} := I_k(\alpha Z)+ \langle \partial f \rangle \text{ and } J^{\tau}_{k+\alpha} := I_k(\alpha Z)+ \langle f, \partial f \rangle
\]
where $I_k (\alpha Z)$ is the $k$-th Hodge ideal of $\alpha Z$ and $\langle \partial f \rangle$ denotes the Jacobian ideal of $f$. Since $f$ is weighted homogeneous, we have $J^\mu_{k+\alpha}=J^\tau_{k+\alpha}$ and we denote them by $J_{k+\alpha}$. Also, we have
\[
J_{\beta} \supset J_{\beta'} \quad \text{for $\beta \leq \beta'$}.
\]
Thus we can define the fractional Laurent polynomial
\begin{equation}
Sp_f^{\tau}(t):=\sum_{\beta \in \mathbb{Q}} (\dim_\mathbb{C} J_\beta / J_{>\beta})\cdot t^\beta
\end{equation}
where  $J_{>\beta}:= J_{\epsilon+ \beta}$ for $0<\epsilon \ll 1$.
The main result in this note is that this Laurent polynomial coincides with the Hodge spectrum of the singularity given by $f$.
\begin{theorem}\label{theorem}
Let $f\colon (\mathbb{C}^n,0)\to(\mathbb{C},0)$ be a weighted homogeneous holomorphic function with an isolated singularity at the origin. Let $Z=\div(f)$. For $k\in \mathbb{Z}$ and $\alpha \in (0,1] \cap \mathbb{Q}$, define 
\[
J_{k+\alpha} := I_k(\alpha Z)+ \langle \partial f \rangle
\]
to be an ideal in $\mathbb{C}\{x_1,\cdots, x_n\}$ where $I_k (\alpha Z)$ is the $k$-th Hodge ideal of $\alpha Z$ and $\langle \partial f \rangle$ is the Jacobian ideal of $f$.

Let $Sp_f(t)=\sum_{\beta\in \mathbb{Q}} n_{f, \beta} t^{\beta}$ be the Hodge spectrum of $f$. Then
	\begin{equation*}
		n_{f,\beta}=\dim_\mathbb{C} J_\beta / J_{>\beta},
	\end{equation*} 
	where  $J_{>\beta}:= J_{\epsilon+ \beta}$ for $0<\epsilon \ll 1$.
\end{theorem}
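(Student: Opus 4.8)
The plan is to prove the theorem by reducing both sides to a single combinatorial invariant of the Milnor algebra $M_f:=\C\{x_1,\dots,x_n\}/\langle\partial f\rangle$, using the two explicit computations available for weighted homogeneous isolated singularities: Steenbrink's formula for $Sp_f$ in terms of a monomial basis of $M_f$, and Zhang's formula for the Hodge ideals $I_k(\alpha Z)$.

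First I would normalize the weights so that $f(\lambda^{w_1}x_1,\dots,\lambda^{w_n}x_n)=\lambda f(x)$ with $w_i\in\Q_{>0}$. Euler's identity $\sum_i w_i x_i\partial_i f=f$ shows $f\in\langle\partial f\rangle$, hence $\langle f,\partial f\rangle=\langle\partial f\rangle$ and $J^\mu_{k+\alpha}=J^\tau_{k+\alpha}=:J_{k+\alpha}$, as claimed before the statement. The ring $M_f$ is finite-dimensional and graded by $\deg x_i=w_i$; fixing a monomial basis $B$ and attaching to $x^a=\prod_i x_i^{a_i}\in B$ its shifted weight $\ell(a):=\sum_i w_i(a_i+1)$, we write $(M_f)_\gamma\subseteq M_f$ for the span of the classes of those $x^a\in B$ with $\ell(a)=\gamma$, so $M_f=\bigoplus_\gamma(M_f)_\gamma$. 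By Steenbrink's formula, $Sp_f(t)=\sum_{x^a\in B}t^{\ell(a)}$, i.e. $n_{f,\beta}=\dim_\C(M_f)_\beta$ for every $\beta$.

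The heart of the proof is the identity, for $k\in\Zp$ and $\alpha\in(0,1]\cap\Q$,
\[
\bigl(I_k(\alpha Z)+\langle\partial f\rangle\bigr)\big/\langle\partial f\rangle\;=\;\bigoplus_{\gamma\ge k+\alpha}(M_f)_\gamma\qquad\text{in }M_f.
\]
For $k=0$ this is the classical monomial description of multiplier ideals: $I_0(\alpha Z)=\mathcal I\bigl((1-\epsilon)\alpha Z\bigr)=\langle x^a:\ell(a)\ge\alpha\rangle$ for $0<\epsilon\ll1$, and since this ideal is monomial and $M_f$ is graded, its image in $M_f$ is exactly the part in weights $\ge\alpha$. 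For $k\ge1$ I would substitute Zhang's explicit formula for $I_k(\alpha Z)$ and reduce modulo $\langle\partial f\rangle$: the terms of that formula built from $f$ and from the partial derivatives $\partial_i f$ die in $M_f$, and what survives is precisely the span of the classes $\overline{x^a}$ with $\ell(a)\ge k+\alpha$. This is the step I expect to be the main obstacle; the delicate points are matching Zhang's normalizations of the index $k$ and the coefficient $\alpha$ with the ones used here, and securing the closed inequality $\ell(a)\ge k+\alpha$ rather than a strict one — and it is exactly here that weighted homogeneity enters, through $\langle f,\partial f\rangle=\langle\partial f\rangle$, which is what makes the $J^\mu$ and $J^\tau$ versions coincide.

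Granting the displayed identity, the theorem follows formally. Every spectral number $\beta$ lies in $(0,n)$, so we may write $\beta=k+\alpha$ with $k=\lceil\beta\rceil-1\in\Zp$ and $\alpha=\beta-k\in(0,1]$. Both $J_\beta$ and $J_{>\beta}:=J_{\beta+\epsilon}$ contain $\langle\partial f\rangle$, and by the identity their images in $M_f$ are $\bigoplus_{\gamma\ge\beta}(M_f)_\gamma$ and $\bigoplus_{\gamma>\beta}(M_f)_\gamma$ (for $\epsilon$ small, since $\ell$ takes only finitely many values on $B$); hence $J_\beta/J_{>\beta}\cong(M_f)_\beta$, of dimension $n_{f,\beta}$ by Steenbrink's formula, while for $\beta\le0$ both sides vanish. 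The monotonicity $J_\beta\supseteq J_{\beta'}$ for $\beta\le\beta'$ asserted in the introduction is visibly a consequence of the same identity as well.
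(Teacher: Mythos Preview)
Your proposal is correct and follows essentially the same route as the paper: both sides are identified with the graded pieces of the Milnor algebra via Steenbrink's monomial description of $Sp_f$ on one hand and Zhang's recursive formula for $I_k(\alpha Z)$ reduced modulo $\langle\partial f\rangle$ on the other, with Euler's relation $f\in\langle\partial f\rangle$ killing the extra terms. The only cosmetic difference is that the paper packages the Steenbrink side as the Hilbert--Poincar\'e series of $M_f$ computed from the regular sequence $\partial_1 f,\dots,\partial_n f$, whereas you invoke the monomial-basis form of the spectrum directly; these are equivalent.
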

We remark that Zhang~\cite{2018arXiv181006656Z} Corollary 3.7 shows that the jumping numbers in $J_{\beta}$ coincide with the roots of the \emph{microlocal $b$-fuction $\tilde{b}_f(s)$} of a weighted homogeneous isolated singularity given by $f$.\footnote{This result also gives a description of the spectral numbers in terms of the jumping numbers as it is known that the set of the spectral numbers is equal to the set of the roots of the microlocal $b$-fuction $\tilde{b}_f(s)$ in the case of weighted homogeneous isolated singularities.
We thank Mihnea Popa for pointing this out to us.} 
 
Unfortunately, the fractional Laurent polynomial $Sp_f^{\tau}(t)$ does not coincide with the Hodge spectrum in general even for isolated singularities. This is because $Sp_f^{\tau}(1)$ would give the \emph{Tjurina number} which is the dimension of $\mathcal{O}/\langle f, \partial f \rangle$ while $Sp_f(1)$ is equal to the \emph{Milnor number} which is the dimension of $\mathcal{O}/\langle \partial f \rangle$.

\subsubsection*{\bf Layout}
Section~\ref{Sec.Preliminaries} contains the basic definitions we need from the theory of Hodge ideals and Hodge spectrums. In Section~\ref{Weighted homogeneous cases}, we focus on the weighted homogeneous isolated cases and recall known results on Hodge ideals and Hodge spectrums. We also state the main theorem and prove it. 
In Section~\ref{Sec.Non-degenerate cases}, we present meaningful examples in non-degenerate cases.
In Section~\ref{Sec.Discussion}, we discuss how to generalise this to other cases.
\subsubsection*{\bf Acknowledgement} We are deeply grateful to Nero Budur for his valuable suggestion and encouragement. His suggestion was our starting point to study this problem. We also thank Mihnea Popa to point out  the meaning of Zhang~\cite{2018arXiv181006656Z} Corollary 3.7. 
\section{Preliminaries}\label{Sec.Preliminaries}
\subsection{Hodge ideals}
Let $X$ be a smooth complex variety of dimension $n$ and let $D$ be a reduced divisor on $X$. Then there is the left $\mathcal{D}_X$-module
\[
	\mathcal{O}_X(*D)=\bigcup_{k\geq 0}\mathcal{O}_X(kD)
\]
of rational functions with poles along $D$, that is, the localization of $\mathcal{O}_X$ along $X$. This is also a left $\mathcal{D}_X$-module underlying the mixed Hodge module $j_{*}\Q^H_U[n]$, where $U=X\setminus D$ and $j\colon U\hookrightarrow X$ is the inclusion map. By \cite{MR1047415} it has the Hodge filtration $F_{k}\mathcal{O}_X(*D)$ which is contained in the pole order filtration, namely
\[
	F_k\mathcal{O}_X(*D)\subseteq\mathcal{O}_X((k+1)D) \textrm{ for all } k\geq 0.
\]
The inclusion above leads to the definition of a coherent sheaf of ideals $I_k(D)$ for each $k \geq 0$ by the formula,
\[
	F_k\mathcal{O}_X(*D)=I_k(D)\otimes\mathcal{O}_X((k+1)D).
\]
Then the ideal sheaf $I_k(D)\subseteq \mathcal{O}_X$, for $k\geq 0$, is called the \emph{$k$-th Hodge ideal}. 

Musta\c{t}\v{a}--Popa generalised this constrcution to $\Q$-divisors in~\cite{2018arXiv180701932M}. Let $D$ be an effective $\Q$-divisor on $X$. Then we can write locally $D=\alpha Z$ where $\alpha$ is a positive rational number and $Z$ is defined by a nonzero regular function $f$, that is, $Z=\div(f)$. Then the twisted version of the localization $\mathcal{D}_X$-module
\[
	\mathcal{M}(f^{-\alpha})\coloneqq \mathcal{O}_X(*Z)f^{-\alpha}
\]
has a filtration $F_k \mathcal{M}(f^{-\alpha})$, with $k\geq 0$ such that $F_k \mathcal{M}(f^{-\alpha})\subseteq \mathcal{O}_X(kZ)f^{-\alpha}$. From this, we can define the \emph{$k$-th Hodge ideal} as follows:
\[
	F_k\mathcal{M}(f^{1-\alpha})=I_k(D)\otimes_{\mathcal{O}_X}\mathcal{O}_X(kZ)f^{-\alpha}.
\]

\subsection{Milnor fibers and Hodge spectrums}
Let $f \colon (\mathbb{C}^n,0) \rightarrow (\mathbb{C},0)$ be the germ of a non-zero holomorphic function. The {\it Milnor fiber} at the origin is 
$$M_{f,0}=\{z\in \mathbb{C}^n \ | \ |z|<\epsilon \;\text{ and}\; f(z)=\delta\} \text{ for } 0<|\delta|\ll\epsilon\ll 1.$$ 
The cohomology groups $H^*(M_f,\mathbb{C})$ carry canonical mixed Hodge structures such that the semi-simple part $T_s$ of the monodromy $T$ acts as an automorphism of finite order of these mixed Hodge structures (see \cite{MR2393625}-12.1.3). By the monodromy theorem, the eigenvalues $\lambda$ of the monodromy action on $H^*(M_f,\mathbb{C})$ are roots of unity. Thus we can define the {\it spectrum multiplicity} of $f$ at $\alpha\in \mathbb{Q}$ to be 
\begin{align*}
n_{f,\alpha}&=\sum_{j\in \mathbb{Z}} (-1)^{j-n+1}\dim Gr_F^p \tilde{H}^{j}(M_{f,0},\mathbb{C})_\lambda\\
&\text{with $p=\lfloor n-\alpha \rfloor$, $\lambda=\exp(-2\pi i\alpha )$,}
\end{align*}
where $\tilde{H}^j(M_{f,0},\mathbb{C})_\lambda$ is the $\lambda$-eigenspace of the reduced cohomology under $T_s$ and $F$ is the Hodge filtration. The $\alpha$ is called a {\it spectral number} if $n_{f,\alpha}\neq 0$. The {\it Hodge spectrum} of $f$ is the fractional Laurent polynomial
$$Sp_f(t):=\sum_{\alpha \in \mathbb{Q}} n_{f,\alpha}t^\alpha,$$
which is an invariant of the singularity of analytic function germs. The Hodge spectrum $Sp_f(t)$ contains all the information of Hodge filtration $F$ and the semi-simple part $T_s$ of the momodromy if $f$ has an isolated singularity at $0\in \mathbb{C}^n$.

The \emph{Milnor algebra} (or \emph{Jacobian algebra}) $\mathcal{M}_f$ 
\[
\mathcal{M}_f:= \mathbb{C}\{x_1,\cdots,x_n\}/ \langle \partial f \rangle
\]
where $\langle \partial f \rangle$ denotes the Jacobian ideal of $f$, i.e. the ideal generated by $\{\frac{\partial f}{\partial x_i}| i=1,\ldots, n\}$. The complex dimension of $\mathcal{M}_f$ is finite if $f$ has an isolated singularity at $0\in \mathbb{C}^n$. It is called the \emph{Milnor number} of $f$, i.e.
\[ 
\mu_f:=\dim _\mathbb{C} \mathcal{M}_f.
\]
Similarly  the \emph{Tjurina algebra} $\mathcal{T}_f:=\mathbb{C}\{x_1,\cdots,x_n\}/ \langle f, \partial f \rangle$ defines the \emph{Tjurina number} $\tau_f:=\dim _\mathbb{C} \mathcal{T}_f$ for isolated singularity cases. 
Obviously, we have $\tau_f\leq \mu_f$. Note that $\tau_f= \mu_f$ if and only if $f$ is weighted homogeneous after a suitable analytic change of variables, i.e. $f \in \langle \partial f \rangle$.

For an isolated singularity, Milnor proved that $\mu_f=\dim\tilde{H}^{n-1}(M_{f,0},\mathbb{C})$ thus we have $\mu_f=Sp_f(1)$ by
the definition of the Hodge spectrum. 

	
\section{Weighted homogeneous cases}\label{Weighted homogeneous cases}
	Let $f\colon (\mathbb{C}^n,0)\to (\mathbb{C},0)$ be a weighted homogeneous polynomial function of weight $(w_1, \cdots, w_n)\in \mathbb{Q}^{n}$, that is, 
	\[
	f (\lambda^{w_1} x_1,\lambda^{w_2} x_2,\ldots, \lambda^{w_n} x_n)=\lambda f(x_1,x_2,\ldots, x_n) \text{ for all $\lambda\in \mathbb{C}$}.
	\]
	For a series $g=\sum g_m x^{m}\in \mathcal{O}=\C\{x_1,\ldots,x_n\}$, define
\[
\supp(g)=\{m\in \Z^n_{\geq 0} \mid g_m\neq 0\}.
\]
Define the weight of a monomial $x_1^{a_1}x_2^{a_2}\cdots x_n^{a_n}$ in the polynomial ring to be 
\begin{equation}\label{eq:wts of monomials}
\wt(x_1^{a_1}x_2^{a_2}\cdots x_n^{a_n})=\sum w_i + \sum a_i w_i.
\end{equation}
This weight function induces a filtration on $\mathcal{O}$ defined by
\[
\mathcal{O}^{\geq \beta}:=\{g\in \mathcal{O} \mid \wt(\supp g) \geq \beta\}.
\]
We can take a monomial basis $B$ of $\mathcal{M}_f$. 
Define
\[
B^{\geq \beta}:=\{v\in B \mid \wt(v)\geq \beta\} \quad \text{and} \quad B^{\beta}:=\{v\in B \mid \wt(v)= \beta\}.
\]
Note that $B^{\geq \beta}=\cup_{\gamma \geq \beta} B^{\gamma}$.

The ring $\mathcal{O}$ has a natural grading with respect to~(\ref{eq:wts of monomials}). For homogeneous elements $g,h\in S$, we have 
\[
\wt(gh)=\wt(g)+\wt(h) -w,
\]
where $w:=\sum_{i=1}^{n} w_i$.

Let $f_1,\cdots,f_c$ be a regular sequence of homogeneous elements in $S$. Let $d_i$ be the weight of $f_i$. Define $S^0:=\mathcal{O}$ and $S^{i}:=S^{i-1}/(f_i)$. Consider the Hilbert-Poincar\'{e} series
\[
{P}_i(t):=\sum_{\alpha} (\dim_{\C} S^i_\alpha)\cdot t^{\alpha}
\]
where $S^i_{\alpha}$ is the vector space of weight-$\alpha$ homogeneous elements in $S^i$.

First note that 
\[
{P}_0(t)= \frac{t^{w}}{\prod_{j=1}^n(1-t^{w_j})}.
\]
From the following short exact sequence
\[
0\to (f_i) \to S^{i-1} \to S^{i} \to 0,
\]
we have
\[
t^{d_i} {P}_{i-1}(t)+t^{w}{P}_{i}(t)=t^{w}{P}_{i-1}(t).
\]
By induction, we get
\begin{equation}\label{eq.HP series}
{P}_i(t) = \frac{t^w}{\prod_{j=1}^n(1-t^{w_j})} \cdot {\prod_{j=1}^i \frac{t^w-t^{d_j}}{t^w}}.
\end{equation}

\subsection{The Hodge spectrum} This section recalls some relevant formulae of the Hodge spectrum for a weighted homogeneous isolated singularity.

There is a well-known formula of the Hodge spectrum in the case of a weighted homogeneous polynomial with an isolated singularity (e.g.~\cite{MR1621831}~II-(8.4.10)).
		\begin{proposition}
			Let $f\colon(\mathbb{C}^n,0)\to(\mathbb{C},0)$ be a weighted homogeneous holomorphic function germ with an isolated singularity at the origin. Assume that the weight of $f$ is $(w_1, \cdots, w_n)$. Then
			\begin{equation}\label{spec_whi}
				Sp_f(t)=\prod_{j=1}^n \frac{t^{w_j}-t}{1-t^{w_j}}.
			\end{equation}
		\end{proposition}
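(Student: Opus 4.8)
The plan is to combine two classical ingredients: the identification of the reduced cohomology of the Milnor fibre with the graded Milnor algebra $\sM_f$ --- compatibly with the Hodge filtration and the semisimple monodromy --- and the Hilbert--Poincar\'e computation already carried out in~(\ref{eq.HP series}).

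First I would recall that, since $f$ is weighted homogeneous with an isolated singularity at the origin, $\tilde H^{j}(M_{f,0},\C)=0$ for $j\neq n-1$, the space $\tilde H^{n-1}(M_{f,0},\C)$ has dimension $\mu_f=\dim_\C\sM_f$, and the monodromy $T$ has finite order, so $T=T_s$. The structural input (Brieskorn, Steenbrink; see~\cite{MR1621831}~II) is the dictionary between $\sM_f$ and this cohomology: choosing a monomial basis $B$ of $\sM_f$, the assignment $x^a\mapsto\bigl[x^a\,dx_1\wedge\dots\wedge dx_n\bigr]$ yields a basis of $\tilde H^{n-1}(M_{f,0},\C)$, the class attached to $x^a$ lies in the eigenspace of $T_s$ for the eigenvalue $\exp(-2\pi i\,\wt(x^a))$, and it lies in $F^{p}$ but not in $F^{p+1}$ for $p=\lfloor n-\wt(x^a)\rfloor$, where $\wt$ is the weight function of~(\ref{eq:wts of monomials}). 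Granting this, the defining formula for the spectrum multiplicity collapses: only $j=n-1$ contributes and $(-1)^{(n-1)-n+1}=1$, so
\[
n_{f,\alpha}=\dim_\C Gr_F^{\lfloor n-\alpha\rfloor}\tilde H^{n-1}(M_{f,0},\C)_{\exp(-2\pi i\alpha)}=\#\{x^a\in B\mid\wt(x^a)=\alpha\},
\]
and therefore $Sp_f(t)=\sum_{x^a\in B}t^{\wt(x^a)}$ is the Hilbert--Poincar\'e series of $\sM_f$ with respect to the weight grading.

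Next I would feed this into the computation of Section~\ref{Weighted homogeneous cases}. As $f$ has an isolated singularity, $\sM_f$ is finite-dimensional, hence $\partial_1 f,\dots,\partial_n f$ is a system of parameters in the regular local ring $\C\{x_1,\dots,x_n\}$ and therefore a regular sequence; each $\partial_j f$ is homogeneous of weight $d_j=w+1-w_j$, where $w=\sum_i w_i$. Substituting $c=n$ and these $d_j$ into~(\ref{eq.HP series}) gives
\[
Sp_f(t)=P_n(t)=\frac{t^w}{\prod_{j=1}^n(1-t^{w_j})}\cdot\prod_{j=1}^n\frac{t^w-t^{\,w+1-w_j}}{t^w}=t^{w}\prod_{j=1}^n\frac{1-t^{1-w_j}}{1-t^{w_j}},
\]
and the identity $t^{w_j}-t=t^{w_j}\bigl(1-t^{1-w_j}\bigr)$ together with $\prod_{j}t^{w_j}=t^w$ rewrites this as $\prod_{j=1}^n\frac{t^{w_j}-t}{1-t^{w_j}}$, which is~(\ref{spec_whi}).

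The only genuinely substantive step is the dictionary invoked in the second paragraph: that $x^a\,dx$ represents a cohomology class of monodromy eigenvalue $\exp(-2\pi i\,\wt(x^a))$ sitting in Hodge level exactly $\lfloor n-\wt(x^a)\rfloor$. I would either cite this verbatim from~\cite{MR1621831}~II-(8.4.10) (Steenbrink's computation for quasi-homogeneous germs), or, for a self-contained treatment, reconstruct it from the Brieskorn lattice $\Omega^n_X/(df\wedge d\Omega^{n-2}_X)$ of the germ: the $\C^\times$-action grades it, the grading pins down the $V$-filtration directly, and the Varchenko--Steenbrink identification of the $V$-filtration spectrum with the Hodge spectrum then closes the argument. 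Everything else is bookkeeping with $\wt$ and the geometric-series manipulation displayed above.
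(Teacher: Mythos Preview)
The paper does not actually prove this proposition; it simply cites it as a known formula (Kulikov~\cite{MR1621831}~II--(8.4.10)), and then, in the paragraph leading to Proposition~\ref{Prop.HP series and the spectrum}, computes the Hilbert--Poincar\'e series of the Milnor algebra and observes that it matches the formula~(\ref{spec_whi}). Your proposal is correct and amounts to unpacking that citation: you supply the Steenbrink dictionary identifying $Sp_f(t)$ with the Hilbert--Poincar\'e series of $\sM_f$, and then you carry out precisely the same regular-sequence computation via~(\ref{eq.HP series}) that the paper performs. So the route is the same, with the difference that you make explicit the step the paper outsources to the literature.
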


Note that $\frac{\partial f}{\partial x_1},\ldots,\frac{\partial f}{\partial x_n}$ form a regular sequence of homogeneous elements in $\C\{x_1,x_2,\ldots, x_n\}$. The weight of $\frac{\partial f}{\partial x_i}$ is $1-w_i+w$. From (\ref{eq.HP series}), it follows that the Hilbert-Poincar\'{e} series $P_f(t)$ of the Milnor algebra is
	\begin{align*}
{P_f}(t)&=\frac{t^w}{\prod_{j=1}^n(1-t^{w_j})} \cdot {\prod_{j=1}^n \frac{t^w-t^{1-w_j+w}}{t^w}}\\
&=\prod_{j=1}^n \frac{t^{w_j}-t}{1-t^{w_j}} =Sp_f(t).
	\end{align*}
We have seen that:
\begin{proposition}\label{Prop.HP series and the spectrum}
Let $f$ be a weighted homogeneous polynomial of weight $(w_1, \cdots, w_n)$ with an isolated singularity at the origin. Then the Hilbert-Poincar\'{e} series ${P_f}(t)$ of the Milnor algebra coincides with the Hodge spectrum of $f$.
\end{proposition}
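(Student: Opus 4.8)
The plan is to recognise $P_f(t)$ as a special instance of the Hilbert--Poincar\'e series computed in~(\ref{eq.HP series}) and then to observe that the closed form which comes out is literally the right-hand side of~(\ref{spec_whi}). The input needed is that the partials $\partial f/\partial x_1,\dots,\partial f/\partial x_n$ form a regular sequence of homogeneous elements of $\mathcal{O}$. This holds because $f$ has an isolated singularity at the origin, so $\mathcal{M}_f=\mathcal{O}/\langle\partial f\rangle$ is finite-dimensional; the ideal $\langle\partial f\rangle$ is then primary to the irrelevant maximal ideal and has height $n$, and being generated by $n$ elements in the Cohen--Macaulay graded ring $\mathcal{O}$ it is a complete intersection. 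Hence~(\ref{eq.HP series}) applies with $c=n$ and $f_i=\partial f/\partial x_i$.

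Next I would compute the weights $d_i$ of the partials in the sense of~(\ref{eq:wts of monomials}). Since $f$ is weighted homogeneous with $f(\lambda^{w_1}x_1,\dots,\lambda^{w_n}x_n)=\lambda f(x)$, every monomial $x^a$ occurring in $f$ satisfies $\sum_j a_jw_j=1$, so $\wt(f)=w+1$ with $w=\sum_j w_j$, and differentiating by $x_i$ lowers $\sum_j a_jw_j$ by $w_i$, giving $d_i=\wt(\partial f/\partial x_i)=1-w_i+w$. Substituting $d_j=1-w_j+w$ into~(\ref{eq.HP series}) with $i=n$, and using $t^w-t^{1-w_j+w}=t^w\cdot t^{-w_j}(t^{w_j}-t)$ together with $\prod_{j=1}^n t^{w_j}=t^w$ so that the prefactor $t^w/\prod_j(1-t^{w_j})$ cancels, one obtains
\[
P_f(t)=\frac{t^w}{\prod_{j=1}^n(1-t^{w_j})}\cdot\prod_{j=1}^n\frac{t^w-t^{1-w_j+w}}{t^w}=\prod_{j=1}^n\frac{t^{w_j}-t}{1-t^{w_j}}.
\]
This is exactly the right-hand side of~(\ref{spec_whi}) for the Hodge spectrum of a weighted homogeneous isolated singularity, so $P_f(t)=Sp_f(t)$.

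I do not expect a real obstacle: the identity is forced once the regular-sequence property and the weights $d_i=1-w_i+w$ are in hand, and the remaining manipulation is the elementary cancellation above. The only point deserving a little care is the bookkeeping of the additive degree-shift $w$ in the convention~(\ref{eq:wts of monomials})—it must be tracked consistently through $P_0(t)$, through the $d_i$, and through the final cancellation—and, if one wishes to compare $P_f(t)$ with $Sp_f(t)$ as honest fractional Laurent polynomials rather than merely as rational functions, one should note that this is automatic from $\dim_{\mathbb{C}}\mathcal{M}_f<\infty$.
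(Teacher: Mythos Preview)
Your proposal is correct and follows essentially the same route as the paper: the paper also notes that the partials form a regular sequence of homogeneous elements with $\wt(\partial f/\partial x_i)=1-w_i+w$, applies~(\ref{eq.HP series}) with $c=n$, simplifies to $\prod_j\frac{t^{w_j}-t}{1-t^{w_j}}$, and identifies this with~(\ref{spec_whi}). Your additional remarks justifying the regular-sequence property and tracking the $w$-shift are consistent with the paper's conventions.
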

		
\subsection{The Hodge ideals} This section recalls the theory of the Hodge ideals for weighted homogeneous isolated singularities.

%
%
\begin{theorem}[\cite{2018arXiv181006656Z}]
	If $D = \alpha Z$, where $0 < \alpha \leq 1$ and $Z$ is a reduced effective divisor defined by $f$, a weighted homogeneous polynomial with an isolated singularity at the origin, then we have
	\[
		F_k\mathcal{M}(f^{1-\alpha})=\sum_{i=0}^k F_{k-i}\mathcal{D}_X\cdot \left(\frac{\mathcal{O}^{\geq \alpha+i}}{f^{i+1}}f^{1-\alpha}\right),
	\]
	where the action $\cdot$ of $\mathcal{D}_X$ on the right hand side is the action on the left $\mathcal{D}_X$-module $\mathcal{M}(f^{1-\alpha})$ defined by
	\[
		D\cdot (wf^{1-\alpha})\coloneqq \left(D(w)+w\frac{(1-\alpha)D(f)}{f}\right)f^{1-\alpha}, \textrm{ for any } D\in Der_{\C}\mathcal{O}_X
	\]
\end{theorem}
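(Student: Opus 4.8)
The plan is to compute $F_k\mathcal{M}(f^{1-\alpha})$ through Saito's description of the Hodge filtration on a localization module in terms of the Kashiwara--Malgrange $V$-filtration, and then to diagonalise everything using the weighted homogeneity of $f$. First I would pass to the graph embedding $\iota\colon X\hookrightarrow X\times\mathbb{C}_t$, $x\mapsto(x,f(x))$, and work with the pushforward $\iota_+\mathcal{O}_X\cong\bigoplus_{j\geq 0}\mathcal{O}_X\,\partial_t^{\,j}\delta$, whose Hodge filtration is the elementary one $F_p\iota_+\mathcal{O}_X=\bigoplus_{j\leq p}\mathcal{O}_X\,\partial_t^{\,j}\delta$. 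The twisted localization $\mathcal{M}(f^{1-\alpha})$ is recovered from the monodromy eigen-piece of the nearby/vanishing cycle picture attached to this module: the operator $\partial_t$ raises the pole order (sending $g\,f^{1-\alpha}/f^{\,i}$ to a multiple of $g\,f^{1-\alpha}/f^{\,i+1}$) and the twist $f^{1-\alpha}$ selects the monodromy eigenvalue $\exp(2\pi i\alpha)$. Saito's results then identify $F_k\mathcal{M}(f^{1-\alpha})$ with the filtration induced from $F_\bullet\iota_+\mathcal{O}_X$ along the $V$-filtration and, crucially, yield \emph{generation by order of the pole}, so the whole problem reduces to computing the $V$-filtration on $\iota_+\mathcal{O}_X$ explicitly.

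Second, I would exploit weighted homogeneity to diagonalise the $V$-filtration. With the Euler operator $\theta=\sum_j w_j x_j\partial_{x_j}$ one has $\theta f=f$, so on the graph the operator $t\partial_t$ acts as $\theta$ on numerators; consequently the $V$-filtration is \emph{diagonal} with respect to the weight grading of $\mathcal{O}_X$, a numerator $g$ landing in the $V$-graded piece indexed by $\wt(g)$. The $b$-function of $f$ controls the jumps of the $V$-filtration, and for an isolated weighted homogeneous singularity the roots of the microlocal $b$-function $\tilde{b}_f(s)$ are precisely the weights $\wt(v)$ for $v$ in a monomial basis $B$ of the Milnor algebra $\mathcal{M}_f$ (equivalently, the spectral numbers). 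Feeding this into the reduction of the previous step, the condition for $\,g\,f^{1-\alpha}/f^{\,i+1}$ to sit at the correct level of the $V$-filtration becomes exactly the weight bound $\wt(g)\geq\alpha+i$, which is the condition $g\in\mathcal{O}^{\geq\alpha+i}$ appearing in the statement.

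Third, I would establish the two inclusions. For $\supseteq$ one checks, by the eigen-piece identification together with a direct weight count, that each generator $g\,f^{1-\alpha}/f^{\,i+1}$ with $g\in\mathcal{O}^{\geq\alpha+i}$ lies in $F_i\mathcal{M}(f^{1-\alpha})$, and then $F$-compatibility of the $\mathcal{D}_X$-action places $F_{k-i}\mathcal{D}_X\cdot\bigl(\mathcal{O}^{\geq\alpha+i}f^{1-\alpha}/f^{\,i+1}\bigr)$ inside $F_k$. For $\subseteq$ one invokes that $\mathcal{M}(f^{1-\alpha})$ underlies a mixed Hodge module, so its Hodge filtration is generated by order of the pole in Saito's sense, namely
\[
F_k\mathcal{M}(f^{1-\alpha})=\sum_{i=0}^{k}F_{k-i}\mathcal{D}_X\cdot\bigl(F_i\mathcal{M}(f^{1-\alpha})\cap\mathcal{O}_X(iZ)f^{-\alpha}\bigr),
\]
and identifies the $i$-th generating piece with $\mathcal{O}^{\geq\alpha+i}f^{1-\alpha}/f^{\,i+1}$ via the diagonalisation of Step two.

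The main obstacle is the reverse inclusion $\subseteq$, specifically the sharpness of the weight bound: one must show that no element of $F_k$ genuinely requires, at pole order $i+1$, a numerator of weight below $\alpha+i$. Equivalently, one must verify that the Hodge filtration is strictly specialisable and saturated with respect to the $V$-filtration in this weighted homogeneous setting, so that repeatedly applying $\partial_t$ produces no unexpected lower-weight contributions. Controlling this strictness is the technical heart of the argument, and it is precisely where the semisimplicity of the monodromy and the clean factorisation of $\tilde{b}_f(s)$ over the weights $\wt(v)$ for an isolated weighted homogeneous singularity are indispensable.
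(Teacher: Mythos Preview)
The paper does not prove this theorem; it is quoted verbatim from Zhang \cite{2018arXiv181006656Z} as a black box and only the corollary is used downstream. So there is no in-paper proof to compare against.

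That said, your outline is the right one and is essentially Zhang's own argument. The two load-bearing inputs are exactly what you isolate: (a) Saito's formula expressing $F_k\mathcal{M}(f^{1-\alpha})$ in terms of the $V$-filtration on the graph pushforward $\iota_+\mathcal{O}_X$, and (b) the classical fact that for weighted homogeneous $f$ the Euler identity $\theta f=f$ forces $-\partial_t t$ to act semisimply with eigenvalues given by the weight function, so that $V^\beta\iota_+\mathcal{O}_X$ is identified with $\mathcal{O}^{\geq\beta}$ at the bottom level. The inclusion $\supseteq$ is then formal, and $\subseteq$ follows from Saito's strict specialisability, which in the weighted homogeneous isolated case is automatic because the monodromy is semisimple and the $b$-function factors linearly over the weights.

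One small caution: your displayed ``generation by pole order'' formula
\[
F_k\mathcal{M}(f^{1-\alpha})=\sum_{i=0}^{k}F_{k-i}\mathcal{D}_X\cdot\bigl(F_i\mathcal{M}(f^{1-\alpha})\cap\mathcal{O}_X(iZ)f^{-\alpha}\bigr)
\]
is not quite the statement one extracts from Saito; the precise version (see Musta\c{t}\v{a}--Popa \cite{2018arXiv180701932M}) reads off $F_i$ at pole order $i+1$ from $\mathrm{Gr}_V^{\alpha}$ of the graph module, and the identification of that graded piece with $\mathcal{O}^{\geq\alpha+i}/f\cdot\mathcal{O}^{\geq\alpha+i-1}$ is where the weight computation actually enters. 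If you tighten that step, the argument goes through.
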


\begin{corollary}[\cite{2018arXiv181006656Z}]\label{Cor.[Zhang]}
	If $D = \alpha Z$, where $0 < \alpha \leq 1$ and $Z$ is a reduced effective divisor defined by $f$, a weighted homogeneous polynomial with an isolated singularity at the origin, then we have
	\[
		I_0(D)=\mathcal{O}^{\geq \alpha}
	\]
	and
	\[
		I_{k+1}(D)=\sum_{v_j\in B^{\geq k+1+\alpha}} \C v_j + \sum_{\substack{1\leq i\leq n \\ a\in I_k(D)}} \langle f\partial_i a-(\alpha+k)a\partial_i f \rangle.
	\]
\end{corollary}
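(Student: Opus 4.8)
The plan is to read off a closed recursion for the ideals $I_k(D)$ directly from the filtration formula of the preceding Theorem, and then to collapse the two superfluous summands using the Euler relation for weighted homogeneous $f$.

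First I would unwind the defining identity $F_k\mathcal{M}(f^{1-\alpha})=I_k(D)\otimes_{\mathcal{O}_X}\mathcal{O}_X(kZ)f^{-\alpha}$, which inside $\mathcal{M}(f^{-\alpha})$ reads $F_k\mathcal{M}(f^{1-\alpha})=f^{-k}I_k(D)\,f^{-\alpha}$. Taking $k=0$ in the Theorem gives $F_0=\mathcal{O}_X\cdot(\mathcal{O}^{\ge\alpha}f^{-\alpha})=\mathcal{O}^{\ge\alpha}f^{-\alpha}$ (as $\mathcal{O}^{\ge\alpha}$ is an ideal, since $\wt(gh)=\wt(g)+\wt(h)-w\ge\wt(h)$), so $I_0(D)=\mathcal{O}^{\ge\alpha}$. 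For the inductive step I would split the $i=k+1$ summand off the Theorem's expression for $F_{k+1}$ and write $F_{k+1-i}\mathcal{D}_X=F_1\mathcal{D}_X\cdot F_{k-i}\mathcal{D}_X$ for $i\le k$ (valid because $\mathrm{gr}\,\mathcal{D}_X=\mathrm{Sym}\,\mathcal{T}_X$ is generated in degree one), obtaining $F_{k+1}\mathcal{M}(f^{1-\alpha})=F_1\mathcal{D}_X\cdot F_k\mathcal{M}(f^{1-\alpha})+f^{-(k+1)}\mathcal{O}^{\ge\alpha+k+1}f^{-\alpha}$. Computing the twisted action $\partial_i\cdot(f^{-k}a\,f^{-\alpha})=f^{-(k+1)}\bigl(f\partial_i a-(\alpha+k)a\,\partial_i f\bigr)f^{-\alpha}$ on the order-one part and clearing $f^{-(k+1)}f^{-\alpha}$, this turns into the ideal recursion
\[
I_{k+1}(D)=f\,I_k(D)+\mathcal{O}^{\ge\alpha+k+1}+K_k,\qquad K_k:=\sum_{\substack{1\le i\le n\\ a\in I_k(D)}}\langle f\partial_i a-(\alpha+k)a\,\partial_i f\rangle,
\]
where $K_k$ is exactly the second summand in the statement. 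In particular $I_{k+1}(D)\supseteq\mathcal{O}^{\ge\alpha+k+1}$, so by induction $\mathcal{O}^{\ge\alpha+k}\subseteq I_k(D)$ for all $k$; note also that each $I_k(D)$ is a weighted homogeneous (graded) ideal, since its generators in the recursion are.

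It then remains to show that, modulo $K_k$, the summands $f\,I_k(D)$ and the Jacobian part of $\mathcal{O}^{\ge\alpha+k+1}$ collapse onto $\sum_{v_j\in B^{\ge k+1+\alpha}}\C v_j$. Using the vector-space splitting $\mathcal{O}=\bigl(\bigoplus_{v\in B}\C v\bigr)\oplus\langle\partial f\rangle$, the $B$-component of any homogeneous $g$ with $\wt(g)\ge\alpha+k+1$ lies in $\sum_{v_j\in B^{\ge k+1+\alpha}}\C v_j$, so the statement reduces to the two containments $f\,I_k(D)\subseteq K_k$ and $\langle\partial f\rangle^{\ge\alpha+k+1}\subseteq K_k$. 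Granting the first, the second is immediate: a homogeneous generator $g\partial_i f$ of weight $\ge\alpha+k+1$ satisfies $\wt(g)\ge\alpha+k+w_i$, so both $g$ and $\partial_i g$ lie in $\mathcal{O}^{\ge\alpha+k}\subseteq I_k(D)$; hence $g\partial_i f\equiv(\alpha+k)^{-1}f\,\partial_i g\in f\,I_k(D)\subseteq K_k\pmod{K_k}$. Combining both containments with the recursion gives $I_{k+1}(D)=\mathcal{O}^{\ge\alpha+k+1}+K_k=\sum_{v_j\in B^{\ge k+1+\alpha}}\C v_j+K_k$, which is the assertion.

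The crux — and the step I expect to be the main obstacle — is the containment $f\,I_k(D)\subseteq K_k$, because of a resonance phenomenon. For homogeneous $a\in I_k(D)$, contracting the defining generators of $K_k$ against the Euler field and using $\sum_i w_i x_i\partial_i f=f$ together with $\sum_i w_i x_i\partial_i a=(\wt(a)-w)a$ yields $(\wt(a)-w-\alpha-k)\,fa\in K_k$, so $fa\in K_k$ whenever $\wt(a)\neq w+\alpha+k$. This naive contraction degenerates precisely at the resonant weight $\wt(a)=w+\alpha+k$. To cover that case I would instead apply the $K_k$-relations to the elements $x_j a\in I_k(D)$ (legitimate since $I_k(D)$ is an ideal): from $x_j a\,\partial_j f\equiv(\alpha+k)^{-1}f\,\partial_j(x_j a)\pmod{K_k}$ and $f=\sum_j w_j x_j\partial_j f$ one computes $fa\equiv(\alpha+k)^{-1}f\sum_j w_j\partial_j(x_j a)=\tfrac{\wt(a)}{\alpha+k}\,fa\pmod{K_k}$, whence $\tfrac{\wt(a)-\alpha-k}{\alpha+k}\,fa\in K_k$ and $fa\in K_k$ whenever $\wt(a)\neq\alpha+k$. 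Since $w=\sum_i w_i>0$, the two excluded weights $w+\alpha+k$ and $\alpha+k$ never coincide, so for every homogeneous $a$ at least one of the two contractions applies, giving $f\,I_k(D)\subseteq K_k$ unconditionally. The one point I would verify carefully at the outset is the bookkeeping of the twisted $\mathcal{D}_X$-action and the $f^{1-\alpha}$ versus $f^{-\alpha}$ normalization, since the precise constants $\alpha+k$ appearing there feed directly into these resonance cancellations.
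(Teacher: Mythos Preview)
The paper does not prove this corollary: both the preceding Theorem and this Corollary are quoted from Zhang~\cite{2018arXiv181006656Z} without argument, so there is no in-paper proof to compare against.

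Your derivation is sound. The passage from the Theorem to the raw recursion $I_{k+1}(D)=f\,I_k(D)+\mathcal{O}^{\ge\alpha+k+1}+K_k$ is correct (the identity $F_{j+1}\mathcal{D}_X=F_1\mathcal{D}_X\cdot F_j\mathcal{D}_X$ and the computation of $\partial_i\cdot(f^{-k}a\,f^{-\alpha})$ are both standard and you have the constants right). The main step, $f\,I_k(D)\subseteq K_k$, is handled cleanly: the direct Euler contraction gives $(\wt(a)-w-\alpha-k)fa\in K_k$, and your second contraction via the ideal elements $x_j a$ gives $(\alpha+k-\wt(a))fa\in K_k$; since $w>0$ the two resonant weights are distinct and every homogeneous $a$ is covered. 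The reduction of $\mathcal{O}^{\ge\alpha+k+1}$ to $\sum_{v_j\in B^{\ge k+1+\alpha}}\C v_j$ modulo $K_k$ via the graded splitting $\mathcal{O}=(\bigoplus_{v\in B}\C v)\oplus\langle\partial f\rangle$ is also correct, once one notes (as you do) that the homogeneous $g_i$ in a decomposition $h=\sum_i g_i\partial_i f$ with $\wt(h)\ge\alpha+k+1$ satisfy $\wt(g_i)\ge\alpha+k+w_i$, so that both $g_i$ and $\partial_i g_i$ lie in $\mathcal{O}^{\ge\alpha+k}\subseteq I_k(D)$. One small point worth making explicit: the positivity $w_i>0$ (hence $w>0$) is being used both here and in the resonance dichotomy, and is implicit in the weighted-homogeneous isolated-singularity hypothesis.
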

\subsection{Main Theorem}
For $k\in \Z_{\geq 0}$ and $\alpha\in \Q\cap(0,1]$, define 
\[
J_{k+\alpha}:=I_{k}(\alpha Z) + \langle \partial f \rangle.
\]
to be an ideal in $\mathbb{C}\{x_1,\cdots, x_n\}$ where $I_k (\alpha Z)$ is the $k$-th Hodge ideal of $\alpha Z$ for $Z=\div(f)$ and $\langle \partial f \rangle$ is the Jacobian ideal of $f$. 
By~Corollary~\ref{Cor.[Zhang]}, we know that $(J_{\beta})_{\beta\in \Q}$ is a decreasing sequence. Thus we can define the fractional Laurent polynomial
\[
Sp_f^{\tau}(t):=\sum_{\beta \in \mathbb{Q}} (\dim_\mathbb{C} J_\beta / J_{>\beta})\cdot t^\beta,
\]
where  $J_{>\beta}:= J_{\epsilon+ \beta}$ for $0<\epsilon \ll 1$.

Let $Sp_f(t)$ be the Hodge spectrum of $f$. Our main result, which is equivalent to Theorem~\ref{theorem}, is:
\begin{theorem}[Main theorem]\label{main theorem} For a weighted homogeneous isolated singularity defined by~$f$, we have
\[
Sp_f(t)=Sp_f^{\tau}(t).
\]
\end{theorem}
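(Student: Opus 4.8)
The plan is to show that both $Sp_f(t)$ and $Sp_f^\tau(t)$ are computed by the same Hilbert--Poincar\'e series machinery developed above, thereby reducing the theorem to a bookkeeping identity on graded pieces. By Proposition~\ref{Prop.HP series and the spectrum}, $Sp_f(t)$ equals the Hilbert--Poincar\'e series $P_f(t)$ of the Milnor algebra $\mathcal{M}_f=\mathcal{O}/\langle\partial f\rangle$, which by~(\ref{eq.HP series}) is $\prod_j \frac{t^{w_j}-t}{1-t^{w_j}}$. So it suffices to prove that $\dim_\mathbb{C} J_\beta/J_{>\beta}$, summed into a Laurent polynomial, recovers the same series; equivalently, that the successive quotients $J_\beta/J_{>\beta}$ are ``detected'' by the monomial basis $B$ of $\mathcal{M}_f$ refined by the weight filtration, i.e.\ $\dim_\mathbb{C} J_{k+\alpha}/J_{>(k+\alpha)} = \#B^{k+\alpha}$ for each $k\in\Z_{\geq 0}$ and $\alpha\in(0,1]$, and vanishes otherwise.

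The key structural input is Zhang's Corollary~\ref{Cor.[Zhang]}. First I would observe that, because $f$ is weighted homogeneous, everything in sight is graded: $\langle\partial f\rangle$ is a graded ideal, and by the recursive description in Corollary~\ref{Cor.[Zhang]} each $I_k(\alpha Z)$ is a graded ideal as well (the generators $v_j\in B^{\geq k+1+\alpha}$ are monomials and the commutator generators $f\partial_i a - (\alpha+k)a\,\partial_i f$ are homogeneous when $a$ is). Hence $J_{k+\alpha}=I_k(\alpha Z)+\langle\partial f\rangle$ is graded, and so is the quotient. Next, I would compare $J_{k+\alpha}$ with $J_{>(k+\alpha)}$: for fixed $k$, decreasing $\alpha$ within $(0,1]$ only changes the ``$\C v_j$'' part by the monomials of weight exactly in the relevant range, while passing from $\alpha=\epsilon$ at level $k$ down to level $k+1$ with $\alpha$ near $1$ is controlled by the same recursion. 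Working modulo $\langle\partial f\rangle$, the commutator generators $f\partial_i a-(\alpha+k)a\,\partial_i f$ collapse: modulo $\langle \partial f\rangle$ the term $a\,\partial_i f$ dies, and $f\partial_i a$ can be analyzed using $f\in\langle\partial f\rangle$ (since $f$ is weighted homogeneous, $f=\sum \tfrac{w_i}{1}x_i\partial_i f$ up to the Euler relation normalization). So modulo $\langle\partial f\rangle$ the ideal $J_{k+\alpha}$ is generated precisely by the span of the basis monomials $v_j\in B^{\geq k+\alpha}$ together with $\langle\partial f\rangle$ itself; that is, the image of $J_{k+\alpha}$ in $\mathcal{M}_f$ is $\mathrm{span}_\C B^{\geq k+\alpha}$.

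Granting that last identification, the computation finishes immediately: $J_{k+\alpha}/J_{>(k+\alpha)} \cong \big(\mathrm{span}\,B^{\geq k+\alpha}\big)\big/\big(\mathrm{span}\,B^{>(k+\alpha)}\big) = \mathrm{span}\,B^{k+\alpha}$, which has dimension $\#B^{k+\alpha} = \dim_\C (\mathcal{M}_f)_{k+\alpha}$, the coefficient of $t^{k+\alpha}$ in $P_f(t)$. Summing over all $k$ and $\alpha$ and invoking Proposition~\ref{Prop.HP series and the spectrum} gives $Sp_f^\tau(t)=P_f(t)=Sp_f(t)$. One has to also check the boundary bookkeeping — that every rational exponent $\beta>0$ with nonzero Milnor-algebra graded piece is of the form $k+\alpha$ with $k\in\Z_{\geq 0}$, $\alpha\in(0,1]$, uniquely, and that $J_\beta=J_{>\beta}$ outside this range — but this is routine since $B$ consists of monomials of positive weight and the spectral numbers of an isolated weighted homogeneous singularity lie in $(0,n)$.

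I expect the main obstacle to be the middle step: rigorously showing that, modulo the Jacobian ideal, the commutator generators in Zhang's recursion contribute nothing beyond the monomial span $B^{\geq k+\alpha}$. This requires carefully using the weighted-homogeneity relation $f\in\langle\partial f\rangle$ together with the precise weight shifts (the weight of $\partial_i f$ is $1-w_i+w$, and multiplication raises or preserves weights) to see that each $f\partial_i a-(\alpha+k)a\,\partial_i f$ either lies in $\langle\partial f\rangle$ or is already accounted for by a higher-weight basis monomial; an induction on $k$ using Corollary~\ref{Cor.[Zhang]} and the graded structure should make this precise, and it is where the equality $\tau_f=\mu_f$ for weighted homogeneous $f$ (hence the agreement of $J^\mu$ and $J^\tau$) is genuinely used.
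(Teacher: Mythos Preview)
Your proposal is correct and follows essentially the same route as the paper: both reduce to showing that $J_\beta$ maps onto $\mathrm{span}\,B^{\geq\beta}$ in $\mathcal{M}_f$, then invoke Proposition~\ref{Prop.HP series and the spectrum}. The paper states the key identity $J_\beta=\langle B^{\geq\beta}\rangle+\langle\partial f\rangle$ as an immediate consequence of Corollary~\ref{Cor.[Zhang]}, and your observation that both summands $a\,\partial_i f$ and $f\,\partial_i a$ of each commutator generator lie in $\langle\partial f\rangle$ (the latter via the Euler relation $f\in\langle\partial f\rangle$) is exactly the one-line justification behind that step---no induction on $k$ is needed, so the ``main obstacle'' you flag is in fact not one.
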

\begin{proof}
Proposition~\ref{Prop.HP series and the spectrum} shows that the Hilbert-Poincar\'{e} series ${P_f}(t)$ of the Milnor algebra coincides with the Hodge spectrum of $f$. Note that
\[
{P_f}(t)=\sum_{\beta \in \mathbb{Q}} |B^{\beta}|\cdot t^\beta
\]
where $|B^{\beta}|$ denotes the number of elements in $B^{\beta}$.

On the other hand, from Corollary~\ref{Cor.[Zhang]}, it follows that
\[
J_{\beta}=\langle B^{\geq \beta} \rangle + \langle \partial f \rangle.
\]
Since $B \cap \langle \partial f \rangle=\emptyset$, we have
\[
\dim_{\C} J_\beta / J_{>\beta}=|B^{\beta}|.
\]
This implies that 
\[
Sp_f(t)={P_f}(t)=Sp_f^{\tau}(t).\qedhere
\]
\end{proof}

\section{Non-degenerate cases}\label{Sec.Non-degenerate cases}
In this section we distinguish two ideals $J^{\mu}_{\beta}$ and $J^{\tau}_{\beta}$ defined in Section \ref{intro}.
Consider two fractional Laurent polynomials 
\[
Sp_f^{\mu}(t):=\sum_{\beta \in \mathbb{Q}} (\dim_\mathbb{C} J^{\mu}_\beta / J^{\mu}_{>\beta})\cdot t^\beta \text{ and }Sp_f^{\tau}(t):=\sum_{\beta \in \mathbb{Q}} (\dim_\mathbb{C} J^{\tau}_\beta / J^{\tau}_{>\beta})\cdot t^\beta.
\] 

Let $f=\sum f_m x^{m}\in \mathcal{O}=\C\{x_1,\cdots,x_n\}$ be a germ of a hypersurface singularity. For a series $g=\sum g_m x^{m}$, define
\[
\supp(g)=\{m\in \Z^n_{\geq 0} \mid g_m\neq 0\}.
\]
The \emph{Newton boundary} $\Gamma(f)$ of $f$ is the set of compact faces (not contained in any hyperplane $x_i=0$) of the Newton polyhedron of $f$. We say that $f$ is \emph{non-degenerate} if no common zeros of $\partial_i f_\sigma$ are in $(\C^{\times})^{n}$ for every $\sigma \in \Gamma(f)$ where $f_\sigma=\sum_{m\in \Cone(\sigma)} f_m x^m$. 
We may assume that $f$ contains $x_i^{m_i}$ for all~$i$. This implies that 
\[
\cup_{\sigma \in \Gamma(f)} \Cone(\sigma)=\R^n_{\geq 0}.
\]

We can define a piecewise linear function $h\colon \R_{\geq 0}^n \to \R_{\geq 0}$ satisfying:
\begin{enumerate}
\item $h$ is linear on $\Cone(\sigma)$ for all $\sigma \in \Gamma(f)$, and
\item $h\mid_{\sigma}=1$ for all $\sigma \in \Gamma(f)$.
\end{enumerate}
The function $h$ can be constructed as follows. For each codimension one face $\sigma \in \Gamma(f)$, there is a linear form $l_{\sigma}$ such that $l_{\sigma}\equiv 1$ on $\sigma$. For $m\in \R^n_{\geq 0}$, define
\[
h(m):=\inf \{l_{\sigma}(m) \mid  \text{$\sigma$ is a codimension one face}\}.
\]
This defines a decreasing filtration on $\mathcal{O}$ by
\[
\mathcal{F}^{\beta} \mathcal{O}=\{g\in \mathcal{O} \mid h(\supp g) \geq \beta\},
\]
which is called the \emph{Newton filtration}. The Newton filtration induces a weight function ${\rho} \colon \mathcal{O}\to \Q$ defined by
\[
{\rho} (g) = \max \{\beta \mid g\in \mathcal{F}^{\beta}\mathcal{O}\}.
\]
Note that this weight function descends to the Milnor algebra $\mathcal{M}_f$ as follows:
\[
\bar{\rho}([g])=\max \{\rho(g')\mid g'\equiv g \bmod \langle \partial f \rangle \}.
\]
Define
\[
{\mathcal{O}}^{\geq \beta}=\{g \in \mathcal{O} \mid {\rho}(g\cdot x_1\cdots x_n )\geq \beta\}.
\]

\begin{remark}
For $g\in \mathcal{O}^{\geq \beta}$, we have that $gf\in \mathcal{O}^{\geq \beta+1}$.
\end{remark}

For a non-degenerate isolated singularity, using Zhang's idea in~\cite{2018arXiv181006656Z}, we can find that (in the same way):
\begin{equation}\label{eq of Hodge ideals for non-degenerate case}
I_k(\alpha Z)={\mathcal{O}}^{\geq \alpha+k}+\sum_{\substack{1\leq i\leq n \\ a\in I_{k-1}(\alpha Z)}} \langle f \partial_i a -(\alpha+k-1)a\partial_i f \rangle.
\end{equation}
This implies that
\begin{equation}\label{eq of J tau for non-degenerate case}
J^{\tau}_{k+\alpha}:=I_k(\alpha Z)+\langle f, \partial f \rangle ={\mathcal{O}}^{\geq \alpha+k}+\langle f, \partial f \rangle,
\end{equation}
for $k\in \Z_{\geq 0 }$ and $\alpha\in \Q\cap(0,1]$. Thus
\[
\dim_{\C} J^{\tau}_{\beta}/J^{\tau}_{>\beta}
\]
is the dimension of the vector space spanned by the homogeneous elements of weight $\beta$ not in the ideal $\langle f, \partial f \rangle$. We may consider the fractional Laurent polynomial
\[
Sp_f^{\tau}(t):=\sum_{\beta \in \mathbb{Q}} (\dim_\mathbb{C} J^{\tau}_\beta / J^{\tau}_{>\beta})\cdot t^\beta
\]
but this should not coincide with the Hodge spectrum in general. Indeed, we have $Sp_f^{\tau}(1)\leq \tau_f$ where $\tau_f$ is the Tjurina number of $f$, which is less than the Milnor number in general.
\begin{example}\label{ex.f=x^r+x2y2+y^s}
Let $f:=x^r+x^2y^2+y^s$ with $s\geq r\geq 4$ and $s > 4$. Note that the weight function $h$ is defined by
\[
h(x^a y^b):=
\begin{cases}
\frac{b}{2}+\frac{a-b}{r} &\text{if $a \geq b$,}\\
\frac{a}{2}+\frac{b-a}{s} &\text{if $a \leq b$.}\\
\end{cases}
\]
In this case, $g \in {\mathcal{O}}^{\geq \beta}$ if and only if $h(g)\geq \beta-\frac{1}{2}$, i.e. we have
\[
{\rho}(g \cdot xy)={\rho}(g)+ \frac{1}{2}.
\]
Furthermore, note that ${\mathcal{O}}^{\geq \frac{3}{2}}\subset \langle f, \partial f \rangle$.\footnote{It is because $\mathcal{O}^{\geq \frac{3}{2}}$ is generated by $x^r,x^2y^2,xy^{\lceil\frac{s+2}{2}\rceil},x^{\lceil\frac{r+2}{2}\rceil}y,y^r$. However, this does not hold in general, e.g. $f=x^7+y^7$ we have $\mathcal{O}^{\geq \frac{9}{7}}\not\subset \langle f, \partial f \rangle$.}

Let $D=\alpha Z$ with $\alpha\in (0,1]\cap \Q$. Then 
\[
I_0(D)={\mathcal{O}}^{\geq \alpha}.
\]
From (\ref{eq of Hodge ideals for non-degenerate case}), we have
\[
I_1(D)={\mathcal{O}}^{\geq \alpha+1}+\sum_{\substack{1\leq i\leq 2 \\ a\in I_{0}(D)}} \langle f \partial_i a -a\alpha\partial_i f \rangle.
\]
For $\alpha \leq \frac{1}{2}$, we have $I_0(\alpha Z)=\mathcal{O}$ and 
\[
I_1(\alpha Z) +\langle \partial f \rangle= {\mathcal{O}}^{\geq \alpha+1} +\langle \partial f \rangle
\]
as $f \in {\mathcal{O}}^{\geq \frac{3}{2}}$.
Then for $\alpha=\frac{1}{2}+\epsilon$ with $0<\epsilon\ll 1$, 
\begin{align*}
I_0((\frac{1}{2}+\epsilon) Z)&=\langle x,y \rangle,\\
I_1((\frac{1}{2}+\epsilon) Z)+\langle \partial f \rangle&= {\mathcal{O}}^{\geq \alpha+1}+ \langle f \rangle+\langle \partial f \rangle=\langle f, \partial f \rangle
\end{align*} 
Thus we have 
\[
J^{\tau}_{\beta}=J^{\mu}_{\beta}\text{ for $\beta \leq \frac{3}{2}+\epsilon$}.
\]
This means that
\[
Sp_f^{\tau}(t)=\sum_{\beta \in \mathbb{Q}} (\dim_\mathbb{C} J^{\tau}_\beta / J^{\tau}_{>\beta})\cdot t^\beta =  \sum_{\beta \in \mathbb{Q}\cap (0,\frac{3}{2}]} (\dim_\mathbb{C} J^{\mu}_\beta / J^{\mu}_{>\beta})\cdot t^\beta
\]
should not coincide with the Hodge spectrum as 
\[
Sp_f^{\tau}(1)=\tau_f=\mu_f-1.
\] 
Indeed, since the Hodge spectrum of $f$ is \[
Sp_f(t)=t^\frac{1}{2}\left(t^{\frac{1}{r}}+\cdots +t^{\frac{r-1}{r}} \right)+t^\frac{1}{2}\left(t^{\frac{1}{s}}+\cdots +t^{\frac{s-1}{s}}\right)+\left( t^{\frac{1}{2}}+t+t^{\frac{3}{2}}\right)
\]
by \cite{MR1621831}~II-(8.5.13), we have
\[
Sp_f(t)=Sp_f^{\tau}(t)+t^{\frac{3}{2}}.
\]
\end{example}
\begin{example}\label{ex. x5+x2y2+y5}
Let $f=x^5+x^2y^2+y^5$. 
Note that we have
\[
1\in \mathcal{O}^{\geq\frac{1}{2}},
f\in \mathcal{O}^{\geq \frac{3}{2}},\text{ and }
\mathcal{O}^{\geq \frac{3}{2}}\subset \langle f, \partial f \rangle.\footnote{$\mathcal{O}^{\geq \frac{3}{2}}= \langle x^5,x^4y,x^2y^2,xy^4,y^5 \rangle $.}
\]
For $0<\alpha\leq \frac{1}{2}$, we have
\begin{align*}
I_0(\alpha Z)&={\mathcal{O}}^{\geq \alpha}=\mathcal{O},\\
I_1(\alpha Z)&={\mathcal{O}}^{\geq \alpha+1}+\langle \partial f \rangle.
\end{align*}
For $\frac{1}{2}<\alpha\leq \frac{7}{10}$,
\begin{align*}
I_0(\alpha Z)&={\mathcal{O}}^{\geq \alpha}= \langle x,y \rangle ,\\
I_1(\alpha Z)+\langle \partial f \rangle&=
{\mathcal{O}}^{\geq \alpha+1}+\sum_{\substack{1\leq i\leq 2 \\ a\in \langle x,y \rangle }} \langle f \partial_i a -a\alpha\partial_i f \rangle +\langle \partial f \rangle\\
&= {\mathcal{O}}^{\geq \alpha+1} + \langle f \rangle +\langle \partial f \rangle=\langle f, \partial f \rangle.
\end{align*} 
For $\frac{7}{10}<\alpha\leq \frac{9}{10}$,
\begin{align*}
I_0(\alpha Z)&={\mathcal{O}}^{\geq \alpha}= \langle x^2,xy, y^2 \rangle \\
I_1(\alpha Z)+\langle \partial f \rangle&=
{\mathcal{O}}^{\geq \alpha+1}+\sum_{\substack{1\leq i\leq 2 \\ a\in \langle x^2,xy,y^2 \rangle }} \langle f \partial_i a -a\alpha\partial_i f \rangle +\langle \partial f \rangle\\
&= {\mathcal{O}}^{\geq \alpha+1} + \langle xf,yf \rangle +\langle \partial f \rangle= \langle xf,yf, \partial f \rangle =\langle \partial f \rangle.
\end{align*} 
For $\frac{9}{10}<\alpha\leq 1$
\begin{align*}
I_0(\alpha Z)&={\mathcal{O}}^{\geq \alpha}= \langle x^3,xy, y^3 \rangle \text{ and }\\
I_1(\alpha Z)+\langle \partial f \rangle&=
{\mathcal{O}}^{\geq \alpha+1}+\sum_{\substack{1\leq i\leq 2 \\ a\in \langle x^3,xy, y^3 \rangle }} \langle f \partial_i a -a\alpha\partial_i f \rangle+\langle \partial f \rangle\\
&= {\mathcal{O}}^{\geq \alpha+1} + \langle xf,yf \rangle+\langle \partial f \rangle= \langle xf,yf, \partial f \rangle =\langle \partial f \rangle.
\end{align*} 
Here, we use 
\[
\mathcal{O}^{\geq \frac{18}{10}}=\langle x^7,x^5y,x^4y^2,x^3y^3,x^2y^4,xy^5,y^7 \rangle \subset \langle xf,yf, \partial f \rangle.
\]
Note that $\langle xf,yf, \partial f \rangle=\langle \partial f \rangle$ as ideals in $\mathcal{O}=\C\{x,y\}$.\footnote{Warning: $\langle xf,yf, \partial f \rangle \neq\langle \partial f \rangle$ as ideals in the polynomial ring $\C[x,y]$.}
From the calculation above, it follows that
\begin{equation*}
J^{\mu}_\beta / J^{\mu}_{>\beta}=
\begin{cases}
\C \cdot 1&\text{if }\beta=\frac{1}{2},\\
\C \cdot x\oplus \C \cdot y &\text{if }\beta=\frac{7}{10},\\
\C \cdot x^2\oplus \C \cdot y^2 &\text{if }\beta=\frac{9}{10},\\
\C \cdot xy &\text{if }\beta=1,\\
\C \cdot x^3\oplus \C \cdot y^3 &\text{if }\beta=\frac{9}{10},\\
\C \cdot x^4\oplus \C \cdot y^4 &\text{if }\beta=\frac{9}{10},\\
\C \cdot y^5 &\text{if }\beta=\frac{17}{10},\\
0 &\text{otherwise}.
\end{cases}
\end{equation*}
This implies that
\begin{align*}
Sp_f(t)&=t^\frac{1}{2}+2 t^\frac{7}{10}+2 t^\frac{9}{10}+ t+2 t^\frac{11}{10}+2 t^\frac{13}{10}+t^\frac{3}{2}\\
Sp^{\tau}_f(t)&=t^\frac{1}{2}+2 t^\frac{7}{10}+2 t^\frac{9}{10}+ t+2 t^\frac{11}{10}+2 t^\frac{13}{10}\\
Sp^{\mu}_f(t)&=t^\frac{1}{2}+2 t^\frac{7}{10}+2 t^\frac{9}{10}+ t+2 t^\frac{11}{10}+2 t^\frac{13}{10}+t^\frac{17}{10}
\end{align*}
Note that $Sp^{\mu}_f(1)=Sp_f(1)=\mu_f$ but $Sp^{\mu}_f(t)\neq Sp_f(t)$. 

\end{example}
\section{Discussion}\label{Sec.Discussion}
In~\cite{2018arXiv180701935M}, Musta\c{t}\v{a}--Popa proved that
\begin{equation}\label{eq:V-filtration}
I_{k}(\alpha Z) =\tilde{V}^{\alpha+k}\mathcal{O} \mod \langle f \rangle,
\end{equation}
where $\tilde{V}^{\bullet}\mathcal{O}$ means the \emph{microlocal $V$-filtration} on $\mathcal{O}$ induced by $f$.

By (\ref{eq:V-filtration}), we know that $(J^{\tau}_{\beta})_{\beta\in \Q_{\geq 0}}$ satisfies that
\[
J^{\tau}_{\beta} \subset J^{\tau}_{\beta'}\quad \text{if $\beta \geq \beta'$}.
\]
Define the fractional Laurent polynomial
\[
Sp_f^{\tau}(t):=\sum_{\beta \in \mathbb{Q}} (\dim_\mathbb{C} J^{\tau}_\beta / J^{\tau}_{>\beta})\cdot t^\beta.
\]
\begin{question}\label{Question:Tjurina number}
For $f$ having an isolated singularity, do we have
\[
Sp_f^{\tau}(1)=\sum_{\beta \in \mathbb{Q}} (\dim_\mathbb{C} J^{\tau}_\beta / J^{\tau}_{>\beta})=\tau_f,
\]
where $\tau_f$ is the Tjurina number of $f$?
\end{question}
\begin{remark}
Question~\ref{Question:Tjurina number} is affirmative if $J_{\beta}=\langle f, \partial f \rangle$ for $\beta\gg 0$.
\end{remark}

Another sequence of ideals $(J^{\mu}_{\beta})_{\beta\in \Q}$
looks closely related to the Hodge spectrum of $f$. However, there is no guarantee that $(J^{\mu}_{\beta})_{\beta\in \Q}$ is a decreasing sequence. Example~\ref{ex. x5+x2y2+y5} illustrates that even if $(J^{\mu}_{\beta})_{\beta\in \Q}$ defines a decreasing sequence, the fractional Laurent polynomial
\[
Sp_f^{\mu}(t):=\sum_{\beta \in \mathbb{Q}} (\dim_\mathbb{C} J^{\mu}_\beta / J^{\mu}_{>\beta})\cdot t^\beta
\]
does not coincide with the Hodge spectrum in general.
\begin{question} Is $Sp_f^{\mu}(t)$ well-defined? In other words, is $(J^{\mu}_{\beta})_{\beta\in \Q}$ a decreasing sequence? If so, $Sp_f^{\mu}(1)=\mu_f$?
\end{question}

\bibliographystyle{alpha}
\bibliography{reference}

\begin{thebibliography}{{Zha}18}

\bibitem[Bud03]{MR2015069}
Nero Budur.
\newblock On {H}odge spectrum and multiplier ideals.
\newblock {\em Math. Ann.}, 327(2):257--270, 2003.

\bibitem[Kul98]{MR1621831}
Valentine~S. Kulikov.
\newblock {\em Mixed {H}odge structures and singularities}, volume 132 of {\em
  Cambridge Tracts in Mathematics}.
\newblock Cambridge University Press, Cambridge, 1998.

\bibitem[MP16]{2016arXiv160508088M}
Mircea {Mustata} and Mihnea {Popa}.
\newblock {Hodge ideals}.
\newblock {\em ArXiv e-prints}, page arXiv:1605.08088, May 2016.

\bibitem[MP18a]{2018arXiv180701932M}
Mircea {Mustata} and Mihnea {Popa}.
\newblock {Hodge ideals for Q-divisors: birational approach}.
\newblock {\em ArXiv e-prints}, page arXiv:1807.01932, July 2018.

\bibitem[MP18b]{2018arXiv180701935M}
Mircea {Mustata} and Mihnea {Popa}.
\newblock {Hodge ideals for Q-divisors, V-filtration, and minimal exponent}.
\newblock {\em ArXiv e-prints}, page arXiv:1807.01935, July 2018.

\bibitem[PS08]{MR2393625}
Chris A.~M. Peters and Joseph H.~M. Steenbrink.
\newblock {\em Mixed {H}odge structures}, volume~52 of {\em Ergebnisse der
  Mathematik und ihrer Grenzgebiete. 3. Folge. A Series of Modern Surveys in
  Mathematics [Results in Mathematics and Related Areas. 3rd Series. A Series
  of Modern Surveys in Mathematics]}.
\newblock Springer-Verlag, Berlin, 2008.

\bibitem[Sai90]{MR1047415}
Morihiko Saito.
\newblock Mixed {H}odge modules.
\newblock {\em Publ. Res. Inst. Math. Sci.}, 26(2):221--333, 1990.

\bibitem[Sai09]{MR2567401}
Morihiko Saito.
\newblock On the {H}odge filtration of {H}odge modules.
\newblock {\em Mosc. Math. J.}, 9(1):161--191, 2009.

\bibitem[{Zha}18]{2018arXiv181006656Z}
Mingyi {Zhang}.
\newblock {Hodge filtration and Hodge ideals for $\mathbb{Q}$-divisors with
  weighted homogeneous isolated singularities}.
\newblock {\em ArXiv e-prints}, page arXiv:1810.06656, October 2018.

\end{thebibliography}

\end{document}